\newcommand{\eqnum}{\refstepcounter{equation}\textup{\tagform@{\theequation}}}
\makeatletter \@addtoreset{equation}{section} \makeatother
\renewcommand{\theequation}{\thesection.\arabic{equation}}
\newtheorem{defn}[equation]{Definition}
\newtheorem{thm}[equation]{Theorem}
\newtheorem*{thm*}{Theorem}
\newtheorem{lem}[equation]{Lemma}
\newtheorem{cor}[equation]{Corollary}
\newtheorem{claim}[equation]{Claim}
\newtheorem*{defthm*}{Definition/Theorem}
\theoremstyle{definition}
\newtheorem{constr}[equation]{Construction}
\newtheorem*{exam*}{Example}
\newcommand\arXiv[1]{\href{http://arxiv.org/abs/#1}{arXiv:#1}}
\newcommand{\changelocaltocdepth}[1]{%
  \addtocontents{toc}{\protect\setcounter{tocdepth}{#1}}%
  \setcounter{tocdepth}{#1}}
\newcommand{\nc}{\newcommand}
\nc{\renc}{\renewcommand}
\nc{\ssec}{\subsection}
\nc{\sssec}{\subsubsection}
\nc{\on}{\operatorname}
\nc{\term}[1]{#1\xspace}
\tikzset{
  commutative diagrams/.cd,
  arrow style=tikz,
  diagrams={>=latex}}
\tikzset{
  column sep/.code=\def\pgfmatrixcolumnsep{\pgf@matrix@xscale*(#1)},
  row sep/.code   =\def\pgfmatrixrowsep{\pgf@matrix@yscale*(#1)},
  matrix xscale/.code=%
    \pgfmathsetmacro\pgf@matrix@xscale{\pgf@matrix@xscale*(#1)},
  matrix yscale/.code=%
    \pgfmathsetmacro\pgf@matrix@yscale{\pgf@matrix@yscale*(#1)},
  matrix scale/.style={/tikz/matrix xscale={#1},/tikz/matrix yscale={#1}}}
\def\pgf@matrix@xscale{1}
\def\pgf@matrix@yscale{1}
\setlist[enumerate,1]{label={(\alph*)},itemsep=\parskip,leftmargin=0pt}
\newlist{thmlist}{enumerate}{1}
\setlist[thmlist,1]{label={\em(\roman*)},ref={\upshape{(\roman*)}},itemsep=\parskip,leftmargin=0pt}     
\newlist{remlist}{enumerate}{1}
\setlist[remlist,1]{label={(\roman*)},itemsep=\parskip,leftmargin=0pt}
\newlist{inlinelist}{enumerate*}{1}
\setlist[inlinelist,1]{label={(\alph*)}}
\nc{\sA}{\ensuremath{\mathcal{A}}\xspace}
\nc{\sB}{\ensuremath{\mathcal{B}}\xspace}
\nc{\sC}{\ensuremath{\mathcal{C}}\xspace}
\nc{\sD}{\ensuremath{\mathcal{D}}\xspace}
\nc{\sE}{\ensuremath{\mathcal{E}}\xspace}
\nc{\sF}{\ensuremath{\mathcal{F}}\xspace}
\nc{\sG}{\ensuremath{\mathcal{G}}\xspace}
\nc{\sH}{\ensuremath{\mathcal{H}}\xspace}
\nc{\sI}{\ensuremath{\mathcal{I}}\xspace}
\nc{\sJ}{\ensuremath{\mathcal{J}}\xspace}
\nc{\sK}{\ensuremath{\mathcal{K}}\xspace}
\nc{\sL}{\ensuremath{\mathcal{L}}\xspace}
\nc{\sM}{\ensuremath{\mathcal{M}}\xspace}
\nc{\sN}{\ensuremath{\mathcal{N}}\xspace}
\nc{\sO}{\ensuremath{\mathcal{O}}\xspace}
\nc{\sP}{\ensuremath{\mathcal{P}}\xspace}
\nc{\sQ}{\ensuremath{\mathcal{Q}}\xspace}
\nc{\sR}{\ensuremath{\mathcal{R}}\xspace}
\nc{\sS}{\ensuremath{\mathcal{S}}\xspace}
\nc{\sT}{\ensuremath{\mathcal{T}}\xspace}
\nc{\sU}{\ensuremath{\mathcal{U}}\xspace}
\nc{\sV}{\ensuremath{\mathcal{V}}\xspace}
\nc{\sW}{\ensuremath{\mathcal{W}}\xspace}
\nc{\sX}{\ensuremath{\mathcal{X}}\xspace}
\nc{\sY}{\ensuremath{\mathcal{Y}}\xspace}
\nc{\sZ}{\ensuremath{\mathcal{Z}}\xspace}
\nc{\bA}{\ensuremath{\mathbf{A}}\xspace}
\nc{\bB}{\ensuremath{\mathbf{B}}\xspace}
\nc{\bC}{\ensuremath{\mathbf{C}}\xspace}
\nc{\bD}{\ensuremath{\mathbf{D}}\xspace}
\nc{\bE}{\ensuremath{\mathbf{E}}\xspace}
\nc{\bF}{\ensuremath{\mathbf{F}}\xspace}
\nc{\bG}{\ensuremath{\mathbf{G}}\xspace}
\nc{\bH}{\ensuremath{\mathbf{H}}\xspace}
\nc{\bI}{\ensuremath{\mathbf{I}}\xspace}
\nc{\bJ}{\ensuremath{\mathbf{J}}\xspace}
\nc{\bK}{\ensuremath{\mathbf{K}}\xspace}
\nc{\bL}{\ensuremath{\mathbf{L}}\xspace}
\nc{\bM}{\ensuremath{\mathbf{M}}\xspace}
\nc{\bN}{\ensuremath{\mathbf{N}}\xspace}
\nc{\bO}{\ensuremath{\mathbf{O}}\xspace}
\nc{\bP}{\ensuremath{\mathbf{P}}\xspace}
\nc{\bQ}{\ensuremath{\mathbf{Q}}\xspace}
\nc{\bR}{\ensuremath{\mathbf{R}}\xspace}
\nc{\bS}{\ensuremath{\mathbf{S}}\xspace}
\nc{\bT}{\ensuremath{\mathbf{T}}\xspace}
\nc{\bU}{\ensuremath{\mathbf{U}}\xspace}
\nc{\bV}{\ensuremath{\mathbf{V}}\xspace}
\nc{\bW}{\ensuremath{\mathbf{W}}\xspace}
\nc{\bX}{\ensuremath{\mathbf{X}}\xspace}
\nc{\bY}{\ensuremath{\mathbf{Y}}\xspace}
\nc{\bZ}{\ensuremath{\mathbf{Z}}\xspace}
\nc{\bbA}{\ensuremath{\mathbb{A}}\xspace}
\nc{\bbB}{\ensuremath{\mathbb{B}}\xspace}
\nc{\bbC}{\ensuremath{\mathbb{C}}\xspace}
\nc{\bbD}{\ensuremath{\mathbb{D}}\xspace}
\nc{\bbE}{\ensuremath{\mathbb{E}}\xspace}
\nc{\bbF}{\ensuremath{\mathbb{F}}\xspace}
\nc{\bbG}{\ensuremath{\mathbb{G}}\xspace}
\nc{\bbH}{\ensuremath{\mathbb{H}}\xspace}
\nc{\bbI}{\ensuremath{\mathbb{I}}\xspace}
\nc{\bbJ}{\ensuremath{\mathbb{J}}\xspace}
\nc{\bbK}{\ensuremath{\mathbb{K}}\xspace}
\nc{\bbL}{\ensuremath{\mathbb{L}}\xspace}
\nc{\bbM}{\ensuremath{\mathbb{M}}\xspace}
\nc{\bbN}{\ensuremath{\mathbb{N}}\xspace}
\nc{\bbO}{\ensuremath{\mathbb{O}}\xspace}
\nc{\bbP}{\ensuremath{\mathbb{P}}\xspace}
\nc{\bbQ}{\ensuremath{\mathbb{Q}}\xspace}
\nc{\bbR}{\ensuremath{\mathbb{R}}\xspace}
\nc{\bbS}{\ensuremath{\mathbb{S}}\xspace}
\nc{\bbT}{\ensuremath{\mathbb{T}}\xspace}
\nc{\bbU}{\ensuremath{\mathbb{U}}\xspace}
\nc{\bbV}{\ensuremath{\mathbb{V}}\xspace}
\nc{\bbW}{\ensuremath{\mathbb{W}}\xspace}
\nc{\bbX}{\ensuremath{\mathbb{X}}\xspace}
\nc{\bbY}{\ensuremath{\mathbb{Y}}\xspace}
\nc{\bbZ}{\ensuremath{\mathbb{Z}}\xspace}
\DeclareMathSymbol{A}{\mathalpha}{operators}{`A}
\DeclareMathSymbol{B}{\mathalpha}{operators}{`B}
\DeclareMathSymbol{C}{\mathalpha}{operators}{`C}
\DeclareMathSymbol{D}{\mathalpha}{operators}{`D}
\DeclareMathSymbol{E}{\mathalpha}{operators}{`E}
\DeclareMathSymbol{F}{\mathalpha}{operators}{`F}
\DeclareMathSymbol{G}{\mathalpha}{operators}{`G}
\DeclareMathSymbol{H}{\mathalpha}{operators}{`H}
\DeclareMathSymbol{I}{\mathalpha}{operators}{`I}
\DeclareMathSymbol{J}{\mathalpha}{operators}{`J}
\DeclareMathSymbol{K}{\mathalpha}{operators}{`K}
\DeclareMathSymbol{L}{\mathalpha}{operators}{`L}
\DeclareMathSymbol{M}{\mathalpha}{operators}{`M}
\DeclareMathSymbol{N}{\mathalpha}{operators}{`N}
\DeclareMathSymbol{O}{\mathalpha}{operators}{`O}
\DeclareMathSymbol{P}{\mathalpha}{operators}{`P}
\DeclareMathSymbol{Q}{\mathalpha}{operators}{`Q}
\DeclareMathSymbol{R}{\mathalpha}{operators}{`R}
\DeclareMathSymbol{S}{\mathalpha}{operators}{`S}
\DeclareMathSymbol{T}{\mathalpha}{operators}{`T}
\DeclareMathSymbol{U}{\mathalpha}{operators}{`U}
\DeclareMathSymbol{V}{\mathalpha}{operators}{`V}
\DeclareMathSymbol{W}{\mathalpha}{operators}{`W}
\DeclareMathSymbol{X}{\mathalpha}{operators}{`X}
\DeclareMathSymbol{Y}{\mathalpha}{operators}{`Y}
\DeclareMathSymbol{Z}{\mathalpha}{operators}{`Z}
\nc{\mrm}[1]{\ensuremath{\mathrm{#1}}\xspace}
\nc{\mit}[1]{\ensuremath{\mathit{#1}}\xspace}
\nc{\mbf}[1]{\ensuremath{\mathbf{#1}}\xspace}
\nc{\mcal}[1]{\ensuremath{\mathcal{#1}}\xspace}
\nc{\msc}[1]{\ensuremath{\mathscr{#1}}\xspace}
\renc{\ge}{\geqslant}
\renc{\le}{\leqslant}
\nc{\id}{\mathrm{id}}
\DeclareMathOperator{\Hom}{\on{Hom}}
\nc{\uHom}{\underline{\smash{\Hom}}}
\DeclareMathOperator{\End}{\on{End}}
\DeclareMathOperator{\Sym}{\on{Sym}}
\nc{\uEnd}{\underline{\smash{\End}}}
\nc{\colim}{\varinjlim}
\renc{\lim}{\varprojlim}
\nc{\Cofib}{\on{Cofib}}
\nc{\Fib}{\on{Fib}}
\nc{\initial}{\varnothing}
\nc{\op}{\mathrm{op}}
\DeclareMathOperator*{\fibprod}{\times}
\DeclareMathOperator*{\fibcoprod}{\operatorname{\sqcup}}
\renc{\setminus}{\smallsetminus}
\newcommand{\thmref}[1]{Theorem~\ref{#1}}
\newcommand{\lemref}[1]{Lemma~\ref{#1}}
\newcommand{\corref}[1]{Corollary~\ref{#1}}
\newcommand{\claimref}[1]{Claim~\ref{#1}}
\renewcommand{\eqref}[1]{(\ref{#1})}
\nc{\SCRing}{\mrm{SCRing}}
\nc{\SCRMod}{\on{SCRMod}}
\nc{\Mod}{\on{Mod}}
\nc{\A}{\bA}
\renc{\P}{\bP}
\nc{\Spec}{\on{Spec}}
\nc{\Qcoh}{\on{Qcoh}}
\nc{\QcohAlg}{\on{QcohAlg}}
\nc{\Perf}{\on{Perf}}
\nc{\cl}{{\mrm{cl}}}
\nc{\Bl}{\on{Bl}}
\nc{\fibprodR}{\fibprod^\bR}
\nc{\Lis}{\mrm{Lis}}
\nc{\K}{\on{K}}
\nc{\scr}{\term{simplicial commutative ring}}
\nc{\scrs}{\term{simplicial commutative rings}}
\nc{\inftyCat}{\term{$\infty$-category}}
\nc{\inftyCats}{\term{$\infty$-categories}}
\nc{\dAs}{\term{derived Artin stack}}
\nc{\dAss}{\term{derived Artin stacks}}
\title{Virtual excess intersection theory\vspace{-2mm}}
\author{Adeel~A.~Khan\vspace{-1mm}}
\date{2021-03-17}
\def\l@subsection{\@tocline{2}{0pt}{4pc}{6pc}{}}
\begin{document}

\begin{abstract}
We prove a K-theoretic excess intersection formula for derived Artin stacks.
When restricted to classical schemes, it gives a refinement and new proof of R.~Thomason's formula.
\vspace{-5mm}
\end{abstract}

\dedicatory{To my little brother, on the occasion of his birthday.}
\thanks{Author partially supported by SFB 1085 Higher Invariants, Universität Regensburg}

\maketitle

\renewcommand\contentsname{\vspace{-1cm}}
\tableofcontents

\parskip 0.2cm
\thispagestyle{empty}


\changelocaltocdepth{1}
\section*{Introduction}

  Suppose given a commutative square of \dAss
    \begin{equation*}
      \begin{tikzcd}
        \sX' \ar{r}{f'}\ar[swap]{d}{p}
          & \sY' \ar{d}{q}
        \\
        \sX \ar[swap]{r}{f}
          & \sY
      \end{tikzcd}
    \end{equation*}
  where $f$ and $f'$ are quasi-smooth closed immersions.
  We call this an \emph{excess intersection square} if the following conditions hold:
    \begin{remlist}
      \item
      The square is cartesian on underlying classical stacks, i.e., the canonical morphism $\sX' \to \sX \fibprod_\sY \sY'$ induces an isomorphism $\sX'_\cl \simeq (\sX \fibprod_\sY \sY')_\cl$.

      \item\label{item:surj}
      The canonical morphism of (shifted) relative cotangent complexes
        \begin{equation*}
          p^*\sL_{\sX/\sY}[-1] \to \sL_{\sX'/\sY'}[-1]
        \end{equation*}
      is surjective (on $\pi_0$).
      In particular, its fibre $\Delta$ is locally free of finite rank.
    \end{remlist}

  For a derived stack $\sX$, let $\K(\sX)$ denote the algebraic K-theory space of perfect complexes on $\sX$.
  In this note we prove the following theorem:

  \begin{thm}\label{thm:excess}
    For any excess intersection square as above, there is a canonical homotopy
      \begin{equation*}
        q^*f_*(-) \simeq f'_*(p^*(-) \cup e(\Delta))
      \end{equation*}
    of maps $\K(\sX) \to \K(\sY')$, where $e(\Delta) \in \K(\sX')$ is the Euler class of the excess sheaf.
  \end{thm}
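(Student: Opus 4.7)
The strategy is to combine derived base change for quasi-smooth closed immersions with a deformation to the normal bundle, reducing the formula to the multiplicativity of the K-theoretic Euler class in a short exact sequence of vector bundles.

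I would begin by forming the derived fibre product $\sZ := \sX \fibprod_\sY \sY'$ with projections $\pi_1 : \sZ \to \sX$ and $\pi_2 : \sZ \to \sY'$. Since $\pi_2$ is the pullback of the quasi-smooth closed immersion $f$, derived base change (taken as a prerequisite) yields a canonical equivalence $q^* f_* \simeq \pi_{2*}\pi_1^*$ of maps $\K(\sX) \to \K(\sY')$. The canonical morphism $i : \sX' \to \sZ$ satisfies $f' = \pi_2 \circ i$ and $p = \pi_1 \circ i$; combining the cotangent fibre sequence $i^*\sL_{\sZ/\sY'} \to \sL_{\sX'/\sY'} \to \sL_{\sX'/\sZ}$ with $\sL_{\sZ/\sY'} \simeq \pi_1^*\sL_{\sX/\sY}$ and hypothesis~(ii) gives $\sL_{\sX'/\sZ} \simeq \Delta[2]$, exhibiting $i$ as a derived thickening ``governed'' by $\Delta$. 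It then suffices to construct a canonical equivalence $\pi_{2*}\pi_1^*(-) \simeq f'_*(p^*(-) \cup e(\Delta))$ of maps $\K(\sX) \to \K(\sY')$.

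I would next apply a derived deformation-to-the-normal-bundle to $f$ and $f'$ simultaneously, producing a one-parameter family of excess intersection squares over $\A^1$: the fibre over $1$ is the given square, while the fibre over $0$ is the linearised square
\begin{equation*}
\begin{tikzcd}
\sX' \ar{r}{0}\ar[swap]{d}{p} & \mathbf{N}_{\sX'/\sY'} \ar{d}{\tilde q} \\
\sX \ar[swap]{r}{0} & \mathbf{N}_{\sX/\sY}
\end{tikzcd}
\end{equation*}
with zero sections of the (classical) normal bundles as horizontals, an induced linear map $\tilde q$ covering $p$, and excess sheaf still $\Delta$. By $\A^1$-invariance of $\K$-theory, the equivalence in the original square reduces to its analogue in the linearised square. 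There, homotopy invariance identifies $\K(\mathbf{N}_{\sX/\sY}) \simeq \K(\sX)$ via $\pi^*$ (and similarly for the primed version), under which the zero-section pushforward becomes multiplication by the K-theoretic Euler class of the normal bundle. The statement thereby collapses to the identity $p^* e(\sN_{\sX/\sY}) \simeq e(\sN_{\sX'/\sY'}) \cup e(\Delta)$ in $\K(\sX')$, which follows from the multiplicativity of the K-theoretic Euler class applied to the short exact sequence of locally free sheaves $0 \to \Delta \to p^*\sL_{\sX/\sY}[-1] \to \sL_{\sX'/\sY'}[-1] \to 0$ provided by hypothesis~(ii).

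The principal obstacle is making the deformation step genuinely coherent: one needs the derived deformation-to-the-normal-bundle to produce a bona fide family of excess intersection squares (preserving both the classical cartesian condition and the locally-free-excess condition along the whole family), and one needs $\A^1$-invariance of $\K$-theory of (possibly singular) derived Artin stacks, both drawing on the motivic/six-functor formalism for derived stacks. Upgrading the resulting identity from an equality of K-theory classes to a coherent homotopy of maps of spaces is the central technical hurdle.
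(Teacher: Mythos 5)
Your skeleton (deformation to the normal bundle plus multiplicativity of the Euler class on the excess sequence) is indeed the Fulton-style strategy the paper follows, but the execution has a genuine gap: both of your key reduction steps rest on $\A^1$-homotopy invariance of $\K$-theory, which is \emph{false} for non-regular schemes, let alone for derived Artin stacks (already $\K_0(R[t])\neq \K_0(R)$ for $R$ singular, and $\K$ is not even nil-invariant, so derived structure makes this worse). This is not a ``technical hurdle'' to be absorbed into a six-functor formalism; it is the central difficulty that Thomason's theorem and this paper are designed to circumvent. Concretely, you use homotopy invariance (a) to pass from the given square to the linearised square along the $\A^1$-family, and (b) to identify $\K$ of the normal bundle with $\K(\sX)$ so that the zero-section pushforward becomes cup product with $e(\sN_{\sX/\sY})$. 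Neither identification exists in this generality. There is also a second, independent obstruction to the $\A^1$-parametrised deformation: to transport an identity from the special fibre back to the general fibre one pushes forward along the retraction of the deformation space onto $\sY'$, which must be proper for $\rho'_*$ to preserve perfect complexes; moreover the class $[0_*(\sO)]$ vanishes in $\K(\A^1)$, so the comparison of the two fibres degenerates to $0\simeq 0$. One is forced to work over $\P^1$, where $[0_*(\sO)]=[\infty_*(\sO)]=1-[\sO(-1)]\neq 0$, and then the special fibre of $M=\Bl_{\sX\times\{\infty\}}(\sY\times\P^1)$ is not the normal bundle but the sum of two virtual Cartier divisors $\P(\sN_{\sX/\sY}\oplus\sO)+\Bl_{\sX}(\sY)$; disposing of the blow-up component (via the decomposition of $(i_\infty)_*(i_\infty)^*$ and the emptiness of $\Bl_{\sX}(\sY)\fibprod_M(\sX\times\P^1)$) is an essential piece of the argument that your proposal omits entirely.

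The paper's fixes are instructive: it never inverts a pullback along a bundle projection. Instead of homotopy invariance for $\bV_\sX(\sE)$ it uses the projective bundle formula for $\P(\sE\oplus\sO)$, which holds unconditionally, to prove that the zero section satisfies $\bar{s}_*(-)\simeq e(\sQ)\cup\pi^*(-)$ where $\sQ$ is the tautological hyperplane subsheaf (not the normal bundle itself). Your final identity $p^*e(\sN_{\sX/\sY})\simeq e(\sN_{\sX'/\sY'})\cup e(\Delta)$ is the right idea, but it must be applied upstairs on $\P(\sE'\oplus\sO)$ to the exact triangle $(\pi')^*\Delta\to q^*\sQ\to\sQ'$, yielding the special case of the theorem for zero sections of projective completions; the deformation space over $\P^1$ then reduces the general case to that one. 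Your opening reduction via $\sZ=\sX\fibprod_\sY\sY'$ and the computation $\sL_{\sX'/\sZ}\simeq\Delta[2]$ is correct but plays no role in closing the argument.
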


  Applied to quotient stacks, this gives an equivariant excess intersection formula:

  \begin{cor}\label{cor:equiv}
    Let $G$ be a flat group algebraic space of finite presentation over an algebraic space $S$.
    For any excess intersection square of $G$-equivariant derived algebraic spaces
    \begin{equation*}
      \begin{tikzcd}
        X' \ar{r}{f'}\ar[swap]{d}{p}
          & Y' \ar{d}{q}
        \\
        X \ar{r}{f}
          & Y,
      \end{tikzcd}
    \end{equation*}
    where $f$ and $f'$ are regular closed immersions, there is a canonical homotopy
    \begin{equation*}
      q^*f_*(-) \simeq f'_*(p^*(-) \cup e(\Delta))
    \end{equation*}
    of maps $\K^G(X) \to \K^G(Y')$.
    Here $\Delta$ is the excess sheaf as above and $\K^G$ is the algebraic K-theory of $G$-equivariant perfect complexes.
  \end{cor}

  Derived stacks arise naturally in moduli theory, especially in curve counting theories such as Gromov--Witten theory and Donaldson--Thomas theory.
  The moduli problems arising in these theories are typically singular, but are nevertheless \emph{quasi-smooth} when regarded as derived stacks.
  As an example, \thmref{thm:excess} can be applied to derived moduli stacks of stable maps and allows a one-line K-theoretic proof of a conjecture of Cox, Katz and Lee \cite{CoxKatzLee} (cf. \cite[Cor.~2.2.6, Eq.~(4)]{Kern}) relating the genus zero Gromov--Witten invariants of a smooth projective variety with those of the zero locus of a section of a convex vector bundle.
  Via the virtual Grothendieck--Riemann--Roch theorem of \cite{KhanVirtual} one also recovers the Chow group level formula proven in \cite{KimKreschPantev}.

  The present paper, together with \cite{KhanKblow} and \cite{KhanGRRLect}, is part of a general program that sets up a K-theoretic formalism of intersection theory on derived schemes and stacks following \cite{SGA6}.
  In particular, the current result was applied in \cite{KhanGRRLect} to generalize the Grothendieck--Riemann--Roch theorem of \cite{SGA6} to quasi-smooth projective morphisms of derived schemes.
  If $X$ is a (possibly derived) scheme, then for any quasi-smooth closed immersion $i : Z \to X$, the class
    \begin{equation*}
      i_*(1) = [i_*(\sO_Z)] \in \K(X)
    \end{equation*}
  is the K-theoretic version of the cohomological virtual fundamental class $[Z]$ constructed in \cite[(3.21)]{KhanVirtual}.
  The GRR formula implies in particular that this class lives in the expected degree of the $\gamma$-filtration (determined by the relative virtual dimension).
  Thus one gets a map from the free abelian group on quasi-smooth closed subschemes (``derived cycles''), graded by virtual dimension, to the graded pieces of the $\gamma$-filtration on K-theory.

  Both \thmref{thm:excess} and \corref{cor:equiv} seem to be new even in the setting of classical algebraic geometry.
  A closed immersion of classical stacks is quasi-smooth if and only if it is regular (a.k.a. a local complete intersection) in the sense of \cite[Exp.~VII, Déf.~1.4]{SGA6} (see \cite[2.3.6]{KhanRydh}), so in that case the statement becomes:

  \begin{cor}
    For any cartesian square of Artin stacks
      \begin{equation*}
        \begin{tikzcd}
          \sX' \ar{r}{f'}\ar[swap]{d}{p}
            & \sY' \ar{d}{q}
          \\
          \sX \ar{r}{f}
            & \sY,
        \end{tikzcd}
      \end{equation*}
    where $f$ and $f'$ are regular closed immersions, there is a canonical homotopy
      \begin{equation*}
        q^*f_*(-) \simeq f'_*(p^*(-) \cup e(\Delta))
      \end{equation*}
    of maps $\K(\sX) \to \K(\sY')$, where $\Delta$ is the excess sheaf.
  \end{cor}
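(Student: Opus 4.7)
The plan is to deduce the corollary directly from \thmref{thm:excess} by recasting the given classical cartesian square as an excess intersection square of derived Artin stacks. By the cited equivalence between regular closed immersions of classical stacks and quasi-smooth closed immersions in the derived sense (see \cite[Sect.~2]{KhanRydh}), the morphisms $f$ and $f'$ are quasi-smooth closed immersions, so it remains only to verify conditions (i) and (ii) in the definition of an excess intersection square.

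Condition (i) is essentially free. The square is assumed to be cartesian in classical Artin stacks, and the three stacks $\sX$, $\sY$, $\sY'$ are all classical. Thus, although the derived fibre product $\sX \fibprod_\sY \sY'$ may carry non-trivial derived structure, its underlying classical stack is identified with the classical fibre product, which by hypothesis is $\sX'$. Hence the canonical map $\sX' \to \sX \fibprod_\sY \sY'$ becomes an isomorphism after applying $(-)_\cl$.

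For condition (ii), the key input is that for a quasi-smooth closed immersion $f : \sX \to \sY$, the shifted relative cotangent complex $\sL_{\sX/\sY}[-1]$ is concentrated in a single degree and agrees with the classical conormal sheaf $N^\vee_{\sX/\sY}$, which is locally free of finite rank on $\sX$. Under this identification, the comparison map $p^* \sL_{\sX/\sY}[-1] \to \sL_{\sX'/\sY'}[-1]$ becomes the natural morphism $p^* N^\vee_{\sX/\sY} \to N^\vee_{\sX'/\sY'}$ of classical conormal sheaves induced by the cartesian square. By the standard theory of regular closed immersions this morphism is surjective with locally free kernel of rank equal to the excess, so condition (ii) is satisfied and the derived fibre $\Delta$ recovers the classical excess sheaf.

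Once both conditions are checked, \thmref{thm:excess} applies verbatim and yields the asserted homotopy $q^*f_*(-) \simeq f'_*(p^*(-) \cup e(\Delta))$ of maps $\K(\sX) \to \K(\sY')$. The only step requiring real care is the identification in (ii): one must match the derived object $\sL_{\sX/\sY}[-1]$ (and its behaviour under derived pullback along $p$) with the classical conormal sheaf and its classical pullback, and thereby verify that the notion of excess arising in the derived formulation coincides with the classical excess sheaf. This is a standard bookkeeping step once the cotangent complex of a quasi-smooth immersion is understood; beyond it, the corollary is a pure specialisation of the main theorem.
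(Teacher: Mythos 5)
Your proposal is correct and follows exactly the paper's (very terse) deduction: the paper derives this corollary from \thmref{thm:excess} solely by remarking that a closed immersion of classical stacks is quasi-smooth if and only if it is regular, leaving the verification of conditions (i) and (ii) implicit. You simply spell out that verification --- identifying $\sL_{\sX/\sY}[-1]$ with the conormal sheaf $I/I^2$ and checking surjectivity of $p^*N^\vee_{\sX/\sY}\to N^\vee_{\sX'/\sY'}$ for a classical cartesian square --- which is accurate and is the intended argument.
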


  For quasi-compact quasi-separated schemes, the excess intersection formula was proven by Thomason \cite[Thm.~3.1]{Thomason}.
  Even in that case our result is more precise in that the proof provides an explicit chain of homotopies between the two sides.

  For a homotopy cartesian\footnote{We remind the reader that a commutative square of \emph{classical} stacks is homotopy cartesian (in the \inftyCat of derived stacks) if and only if it is cartesian and Tor-independent.} square, the excess sheaf $\Delta$ vanishes, so the excess intersection formula reduces to the base change formula.
  More interesting are the following two special cases:

  \begin{cor}[Self-intersections]
    For any quasi-smooth closed immersion $f : \sX \hookrightarrow \sY$ of \dAss, there is a canonical homotopy
      \begin{equation*}
        f^*f_* \simeq (-) \cup e(\sN_{\sX/\sY})
      \end{equation*}
    of maps $\K(\sX) \to \K(\sX)$.
  \end{cor}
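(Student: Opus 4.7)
The plan is to deduce the corollary as a direct specialisation of \thmref{thm:excess} applied to the self-intersection square
\[
\begin{tikzcd}
\sX \ar{r}{\id_\sX}\ar[swap]{d}{\id_\sX}
  & \sX \ar{d}{f}
\\
\sX \ar[swap]{r}{f}
  & \sY.
\end{tikzcd}
\]
So I take $\sX' = \sY' = \sX$ with $f' = \id_\sX$, $p = \id_\sX$, $q = f$.

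The first step is to check that this square satisfies the two conditions for being an excess intersection square. Condition (i) holds because $f$ is a closed immersion: the classical fibre product $(\sX \fibprod_\sY \sX)_\cl$ is $\sX_\cl \fibprod_{\sY_\cl} \sX_\cl = \sX_\cl$, via the diagonal, and this agrees with the canonical map from $\sX$. For condition (ii), the relative cotangent complex $\sL_{\sX/\sX}$ vanishes, so the map $p^*\sL_{\sX/\sY}[-1] \to \sL_{\sX'/\sY'}[-1]$ is the zero map to $0$, trivially surjective on $\pi_0$; its fibre is $\sL_{\sX/\sY}[-1]$, which is locally free of finite rank precisely because $f$ is a quasi-smooth closed immersion (see \cite[Sect.~2]{KhanRydh}). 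Thus the excess sheaf here is $\Delta = \sL_{\sX/\sY}[-1] = \sN_{\sX/\sY}$, the normal sheaf of $f$ in the convention of the paper.

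With these verifications in hand, the second step is simply to invoke \thmref{thm:excess}, which produces a canonical homotopy
\[
f^* f_*(-) = q^* f_*(-) \simeq f'_*(p^*(-) \cup e(\Delta)) = \id_*(\id^*(-) \cup e(\sN_{\sX/\sY})) = (-) \cup e(\sN_{\sX/\sY})
\]
of maps $\K(\sX) \to \K(\sX)$, as required.

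Since the whole argument is just an unpacking of Theorem 1.1 in a degenerate configuration, there is no genuine obstacle; the only part that requires any care is confirming that the vanishing $\sL_{\sX/\sX} = 0$ is compatible with the conventions used in stating condition (ii) (so that the fibre really is the full shifted cotangent complex and gets identified with $\sN_{\sX/\sY}$), and that the Euler class notation $e(\Delta)$ of Theorem 1.1 agrees, through this identification, with $e(\sN_{\sX/\sY})$ in the corollary.
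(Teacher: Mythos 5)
Your proposal is correct and matches the paper exactly: the corollary is obtained by applying \thmref{thm:excess} to the self-intersection square with $p = q' = \id$ and $q = f$, where the excess sheaf is $\Delta = \sL_{\sX/\sY}[-1] = \sN_{\sX/\sY}$. Your extra verification that the square satisfies conditions (i) and (ii) is a welcome addition the paper leaves implicit (note only that the paper calls $\sL_{\sX/\sY}[-1]$ the \emph{conormal} sheaf).
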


  This is the result of applying \thmref{thm:excess} to the self-intersection square
    \begin{equation*}
      \begin{tikzcd}
        \sX \ar[equals]{r}\ar[equals]{d}
          & \sX \ar{d}{f}
        \\
        \sX \ar{r}{f}
          & \sY,
      \end{tikzcd}
    \end{equation*}
  where the excess sheaf is the conormal sheaf $\sN_{\sX/\sY} = \sL_{\sX/\sY}[-1]$.
  A self-intersection formula for regular equivariant classical algebraic spaces was previously obtained by G.~Vezzosi and A.~Vistoli in \cite[Thm.~2.1]{VezzosiVistoli}\footnote{%
    Moreover, the refined statement we prove here, identifying an explicit homotopy between the two sides of the formula, is strong enough to correct the gap in the proof of \cite[Thm.~3.2]{VezzosiVistoli} that the authors point out in the erratum.
  }.
  Note that the equivariant version of this formula can be used to give a simple derivation of the virtual Atiyah--Bott formula for localizing to the fixed points of a torus action as in \cite[Sect.~3]{Qu} and \cite[Sect.~5]{CiocanFontanineKapranov}.

  Similarly, we get a generalization of the ``formule clef'' (key formula) of \cite[Exp.~VII]{SGA6}:

  \begin{cor}[Blow-ups]
    For any quasi-smooth closed immersion $f : \sX \hookrightarrow \sY$ of \dAss, consider the blow-up square
      \begin{equation*}
        \begin{tikzcd}
          \bP(\sN_{\sX/\sY}) \ar{r}{i}\ar[swap]{d}{p}
            & \Bl_{\sX}(\sY) \ar{d}{q}
          \\
          \sX \ar{r}{f}
            & \sY.
        \end{tikzcd}
      \end{equation*}
    Then there is a canonical homotopy
      \begin{equation*}
        q^*f_* \simeq i_*(p^*(-) \cup e(\Delta))
      \end{equation*}
    of maps $\K(\sX) \to \K(\Bl_{\sX}(\sY))$.
  \end{cor}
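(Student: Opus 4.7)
The approach is to apply \thmref{thm:excess} directly to the given square, interpreting $\Bl_{\sX}(\sY)$ as the derived blow-up of the quasi-smooth closed immersion $f$ in the sense of \cite{KhanRydh}. By construction, $i$ is then a quasi-smooth closed immersion of virtual codimension one whose source is the exceptional virtual Cartier divisor, canonically identified with $\bP(\sN_{\sX/\sY})$. The work reduces to verifying conditions (i) and (ii) of an excess intersection square.

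For (i), I would use the fact that the derived blow-up of a quasi-smooth closed immersion has classical truncation equal to the classical blow-up of the classical truncations of $\sX$ and $\sY$, and similarly for the exceptional divisor. The induced map $\bP(\sN_{\sX/\sY})_\cl \to (\sX \fibprod_\sY \Bl_{\sX}(\sY))_\cl$ is then the standard scheme-theoretic identification of the exceptional divisor with the fibre of $q$ over $\sX$.

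For (ii), I would identify the canonical map
\[
p^*\sN_{\sX/\sY} = p^*\sL_{\sX/\sY}[-1] \to \sL_{\bP(\sN_{\sX/\sY})/\Bl_{\sX}(\sY)}[-1]
\]
with the tautological surjection $p^*\sN_{\sX/\sY} \twoheadrightarrow \sO_{\bP(\sN_{\sX/\sY})}(1)$ coming from $\bP(\sN_{\sX/\sY}) = \on{Proj}_{\sX}\on{Sym}(\sN_{\sX/\sY})$: the conormal sheaf of the exceptional divisor is the tautological line bundle $\sO(1)$, and the comparison map of cotangent complexes recovers the universal quotient. Its fibre $\Delta$ is the tautological subsheaf, locally free of rank $\rk(\sN_{\sX/\sY}) - 1$, appearing as the first term of the relative Euler sequence on $\bP(\sN_{\sX/\sY})$. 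Applying \thmref{thm:excess} will then yield the claimed homotopy.

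The main obstacle, I expect, is the identification in step~(ii) of the relative cotangent complex of $i$ and of the canonical map: both statements require the specific derived-geometric content of \cite{KhanRydh}, namely that the exceptional divisor is \emph{virtually} Cartier even when the classical blow-up is not Tor-independent with $f$. Without this, condition (ii) could fail or yield the wrong $\Delta$; with it, the calculation reduces to standard linear-algebraic properties of projective bundles.
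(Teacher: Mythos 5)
Your proposal is correct and matches the paper's treatment: the corollary is obtained by applying \thmref{thm:excess} to the blow-up square, with the paper simply citing \cite{KhanRydh} for the fact that this square is an excess intersection square (indeed the universal one in which the upper map is a virtual Cartier divisor). Your verification of conditions (i) and (ii), identifying the conormal sheaf of the exceptional divisor with $\sO(1)$ and the excess sheaf $\Delta$ with the kernel of $p^*\sN_{\sX/\sY} \twoheadrightarrow \sO(1)$, is exactly the content the paper delegates to that reference.
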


  The blow-up square is the universal excess square over $f : \sX \to \sY$ such that the upper morphism $i$ is a virtual Cartier divisor; see \cite{KhanRydh}.

  The proof of \thmref{thm:excess} is inspired by Fulton's proof of the Grothendieck--Riemann--Roch formula \cite[Chap.~15]{Fulton}, and uses a similar argument involving the deformation space $\Bl_{\sX\times\{\infty\}}(\sY\times\P^1)$ to reduce to the case where $f$ and $f'$ are zero sections of (projective completions of) vector bundles.
  Due to the failure of $\A^1$-homotopy invariance in algebraic K-theory (of singular spaces), the reduction is much more subtle than for the analogous result in motivic Borel--Moore homology (= higher Chow groups, under suitable hypotheses), which was proven in \cite[Prop.~3.15]{KhanVirtual}.

  The reader will find that the proof also goes through for any ``additive invariant'' in place of algebraic K-theory, i.e., for invariants of stable \inftyCats that satisfy additivity in the sense of Waldhausen.

\section{Derived symmetric powers}
\label{sec:sym}

  Let $\sX$ be a \dAs.
  We denote by $\Qcoh(\sX)$ the stable presentable \inftyCat of quasi-coherent $\sO_{\sX}$-modules, and by $\Qcoh(\sX)_{\ge 0}$ the full subcategory of connective objects.
  Recall that $\Qcoh(\sX)$ is the limit
    \begin{equation*}
      \lim_{u : X \to \sX} \Qcoh(X)
    \end{equation*}
  taken over the \inftyCat $\Lis(\sX)$ of smooth morphisms $u : X \to \sX$, where $X$ is an affine derived scheme.
  In other words, the sheaf $\sX \mapsto \Qcoh(\sX)$ is right Kan extended from affines.
  For an affine $X=\Spec(R)$, $\Qcoh(X)$ is equivalent to the stable \inftyCat of (nonconnective) modules over the \scr $R$, in the sense of \cite[Not.~25.2.1.1]{SAG}.
  Similarly for the connective and perfect subcategories $\Qcoh(-)_{\ge 0}$ and $\Perf(-)$, respectively.
  Recall that $\K(\sX)$ is defined as the algebraic K-theory space of the stable \inftyCat $\Perf(\sX)$.
  See \cite{KhanKblow} or \cite{KhanKstack} for more details on these definitions.
  We will in particular make use of the base change and projection formulas (see \cite[Rem.~1.9]{KhanKstack} or \cite[Cor.~3.4.2.2 (3), Rem.~3.4.2.6]{SAG}).
  
  By $\QcohAlg(\sX)$ we denote the presentable \inftyCat of quasi-coherent $\sO_{\sX}$-algebras.
  This admits a similar description as above, and for $\Spec(R)$ is equivalent to the \inftyCat of $R$-algebras.
  
  The \emph{derived symmetric algebra} functor
    \begin{equation*}
      \Sym^*_{\sO_\sX} : \Qcoh(\sX)_{\ge 0} \to \QcohAlg(\sX)
    \end{equation*}
  is left adjoint to the forgetful functor.
  For any morphism $f : \sX' \to \sX$ there are natural isomorphisms
    \begin{equation*}
      f^*(\Sym^*_{\sO_{\sX}}(\sF)) \simeq \Sym^*_{\sO_{\sX'}}(f^*\sF)
    \end{equation*}
  for every $\sF \in \Qcoh(\sX)_{\ge0}$ (since the forgetful functors commute with $f_*$).

  The derived symmetric algebra $\Sym^*_{\sO_\sX}(\sF)$ can be described in terms of the derived symmetric powers $\Sym^n_{\sO_\sX}(\sF)$.
  These are constructed in the affine case in \cite[Sect.~25.2.2]{SAG}, and extend to stacks by descent.
  (We warn the reader that, even for ordinary commutative rings, the derived symmetric powers are different from the classical ones on non-flat modules; see \cite[Prop.~25.2.3.4]{SAG} or \cite[Sect.~7]{QuillenHomology} for a classical introduction.)

  \begin{constr}
    Let $q : (\SCRMod)_{\ge0} \to \SCRing$ denote the cocartesian fibration associated to the presheaf of \inftyCats $R \mapsto (\Mod_R)_{\ge0}$; its objects are pairs $(R, M)$, where $R\in\SCRing$ is a \scr and $M\in(\Mod_R)_{\ge0}$ is a connective $R$-module.
    By \cite[Constr.~25.2.2.1]{SAG}, there is for each integer $n\ge0$ a functor $(\SCRMod)_{\ge0} \to (\SCRMod)_{\ge0}$ given informally by the assignment
      \begin{equation*}
        (R, M) \mapsto (R, \Sym^n_R(M)).
      \end{equation*}
    This functor preserves $q$-cocartesian morphisms \cite[Prop.~25.2.3.1]{SAG} and therefore induces functors
      \begin{equation*}
        \Sym^n_R : (\Mod_R)_{\ge 0} \to (\Mod_R)_{\ge 0},
      \end{equation*}
    which define a natural transformation of functors as $R$ varies.
    By right Kan extension, this extends to a natural transformation on the \inftyCat of \dAss.
    In other words for every \dAs $\sX$ we have functors
      \begin{equation*}
        \Sym^n_{\sO_\sX} : \Qcoh(\sX)_{\ge 0} \to \Qcoh(\sX)_{\ge 0},
      \end{equation*}
    which commute with $f^*$.
  \end{constr}

  \begin{lem}\label{lem:Sym powers}
    Let $\sX$ be a \dAs.
    Then for every connective quasi-coherent sheaf $\sE \in \Qcoh(\sX)_{\ge0}$, there is a canonical isomorphism
      \begin{equation*}
        \Sym^*_{\sO_\sX}(\sE)
          \simeq \bigoplus_{n\ge0} \Sym^n_{\sO_\sX}(\sE)
      \end{equation*}
    in $\Qcoh(\sX)$.
  \end{lem}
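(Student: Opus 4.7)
The plan is to reduce to the affine case, where the decomposition is essentially built into the construction of the derived symmetric algebra, and then extend to derived Artin stacks by descent.

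First I would verify the affine case: for $R\in\SCRing$ a \scr and $M\in(\Mod_R)_{\ge 0}$ connective, the isomorphism
\begin{equation*}
  \Sym^*_R(M) \simeq \bigoplus_{n\ge 0} \Sym^n_R(M)
\end{equation*}
of $R$-modules holds by the construction of \cite[Sect.~25.2.2]{SAG}, since the homogeneous components $\Sym^n_R(M)$ are by definition the direct summands of the derived symmetric algebra. Moreover, this decomposition is natural in $R$: for any morphism $R\to R'$ of \scrs, base change along $R\to R'$ commutes both with $\Sym^*$ (it is a left adjoint so preserves the adjunction defining the symmetric algebra) and with each $\Sym^n$ (by the construction recalled above from \cite[Prop.~25.2.3.1]{SAG}), and also with arbitrary direct sums, so the isomorphism is compatible with pullbacks. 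This yields, for every \dAs $\sX$ and every smooth morphism $u : X\to \sX$ from an affine $X=\Spec(R)$, a canonical isomorphism
\begin{equation*}
  u^*\Sym^*_{\sO_\sX}(\sE) \simeq \Sym^*_{\sO_X}(u^*\sE) \simeq \bigoplus_{n\ge 0} \Sym^n_{\sO_X}(u^*\sE) \simeq u^*\Bigl(\bigoplus_{n\ge 0} \Sym^n_{\sO_\sX}(\sE)\Bigr),
\end{equation*}
where the last isomorphism uses that $u^*$ is a left adjoint and so commutes with direct sums.

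Next I would invoke descent to globalize. Both functors $\Qcoh(-)_{\ge 0}\to \Qcoh(-)$ in question — namely $\sE\mapsto \Sym^*_{\sO_{(-)}}(\sE)$ and $\sE\mapsto \bigoplus_n \Sym^n_{\sO_{(-)}}(\sE)$ — are right Kan extended from the \inftyCat of affine derived schemes: the former because the derived symmetric algebra was constructed this way in the paper, and the latter because both the individual $\Sym^n_{\sO_{(-)}}$ and the direct sum operation (as a colimit in $\Qcoh$) are compatible with smooth pullback. Since the isomorphism holds functorially on each affine $u : X\to \sX$ in $\Lis(\sX)$ and is compatible with further pullback, passing to the limit $\lim_{u : X \to \sX} \Qcoh(X)$ produces the desired canonical isomorphism in $\Qcoh(\sX)$.

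The only point that requires care — and which I would identify as the \emph{main substantive step} — is checking that the coproduct $\bigoplus_{n\ge 0}\Sym^n_{\sO_\sX}(\sE)$ is really compatible with right Kan extension, i.e.\ that it agrees with the object obtained by globalizing the affine-local decompositions. This comes down to the fact that smooth pullback $u^*$ is a left adjoint and hence preserves coproducts; once this is in hand the descent argument is automatic. No genuine obstacle appears beyond bookkeeping.
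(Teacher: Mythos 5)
Your proposal is correct and follows essentially the same route as the paper: establish the identification $\Sym^*_R(M)\simeq\bigoplus_n\Sym^n_R(M)$ in the affine case via the constructions of \cite[Sect.~25.2.2]{SAG} (the paper cites \cite[Constr.~25.2.2.6]{SAG} specifically, so the identification is a construction there rather than holding ``by definition''), check naturality in $R$, and conclude by right Kan extension from affines. The point you single out as the main substantive step --- that $u^*$ preserves coproducts, so the direct sum is compatible with the limit over $\Lis(\sX)$ --- is exactly the implicit content of the paper's one-line descent argument.
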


  \begin{proof}
    As $\sX$ varies, both $\Sym^*_{\sO_{\sX}}(-)$ and $\bigoplus_n \Sym^n_{\sO_{\sX}}(-)$ define natural transformations $\Qcoh(-)_{\ge0} \to \Qcoh(-)_{\ge0}$ of presheaves on the \inftyCat of \dAss.
    On the restrictions to affines, the two are canonically equivalent by \cite[Constr.~25.2.2.6]{SAG}.
    Since $\Qcoh(-)_{\ge0}$ is right Kan extended from affines, the claim follows.
  \end{proof}

  \begin{lem}\label{lem:Sym determinant}
    Let $\sX$ be a \dAs and $\sE' \to \sE \to \sE''$ a cofibre sequence of connective perfect complexes.
    Then there are canonical equivalences
      \begin{equation*}
        [\Sym^n_{\sO_{\sX}}(\sE)]
          \simeq \bigoplus_{i+j=n} [\Sym^i_{\sO_{\sX}}(\sE') \otimes_{\sO_{\sX}} \Sym^j_{\sO_{\sX}}(\sE'')]
      \end{equation*}
    for every $n\ge0$.
    If $\Sym^n_{\sO_{\sX}}(\sE) = 0$ for all sufficiently large $n \gg 0$, and similarly for $\sE'$ and $\sE''$, then also
      \begin{equation*}
        [\Sym^*_{\sO_{\sX}}(\sE)]
          \simeq [\Sym^*_{\sO_{\sX}}(\sE') \otimes_{\sO_{\sX}} \Sym^*_{\sO_{\sX}}(\sE'')]
      \end{equation*}
    in $\K(\sX)$.
  \end{lem}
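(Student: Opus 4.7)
The strategy is to realise the cofibre sequence as a two-step filtration on $\sE$, promote $\Sym^n$ to a functor on filtered connective sheaves, and identify the associated graded of this filtered lift with $\bigoplus_{i+j=n}\Sym^i_{\sO_{\sX}}(\sE')\otimes_{\sO_{\sX}}\Sym^j_{\sO_{\sX}}(\sE'')$; the K-theoretic equivalences will then follow from additivity applied to each step of the filtration.

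By the same descent argument as in \lemref{lem:Sym powers}, it suffices to treat the affine case $\sX = \Spec(R)$ for a \scr $R$. There the map $\sE' \to \sE$ defines a filtered connective $R$-module $F^{\bullet}$ with $F^0 = \sE$, $F^1 = \sE'$ (equipped with its structure map to $F^0$), and $F^p = 0$ for $p \ge 2$; the associated graded is $\sE'$ in degree $1$ and $\sE''$ in degree $0$. The derived symmetric power $\Sym^n_R$ extends to a functor on filtered connective modules (equivalently, via the Rees construction, on $\mathbb{G}_m$-equivariant connective modules over $\A^1$), producing a filtered refinement of $\Sym^n(\sE)$. Its associated graded is computed by restricting the Rees family to $\{0\}\subset\A^1$, where the filtered module degenerates to its associated graded: one obtains $\Sym^n(\sE' \oplus \sE'')$. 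Because $\Sym^*_{\sO_{\sX}}$ is a left adjoint, it sends coproducts of connective sheaves to tensor products of algebras, so $\Sym^*(\sE' \oplus \sE'') \simeq \Sym^*(\sE')\otimes\Sym^*(\sE'')$; combined with \lemref{lem:Sym powers}, this gives the degree-wise identification of the associated graded with $\bigoplus_{i+j=n}\Sym^i(\sE')\otimes\Sym^j(\sE'')$.

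Since each piece of the filtration remains perfect, the first equivalence in $\K(\sX)$ follows by iterated application of the additivity theorem, expressing $[\Sym^n(\sE)]$ as the sum of the classes of the graded pieces. The second equivalence is then deduced by summing over $n$: under the vanishing hypothesis, \lemref{lem:Sym powers} rewrites $\Sym^*$ of each argument as a finite direct sum of $\Sym^n$, and bundling the partial equivalences yields $[\Sym^*(\sE)] \simeq \bigoplus_{i,j}[\Sym^i(\sE')\otimes\Sym^j(\sE'')] \simeq [\Sym^*(\sE')\otimes\Sym^*(\sE'')]$. The main obstacle I anticipate is the construction and verification of the filtered lift of $\Sym^n$ with the advertised associated graded: this is really a statement about the compatibility of polynomial functors with the Rees/filtered framework, where the binomial decomposition at the graded level must be extracted, by degeneration, from the standard splitting of symmetric algebras on a direct sum.
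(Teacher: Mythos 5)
Your proposal is correct and follows essentially the same route as the paper: both arguments rest on a canonical filtration of $\Sym^n_{\sO_\sX}(\sE)$ with graded pieces $\Sym^i_{\sO_\sX}(\sE')\otimes_{\sO_\sX}\Sym^j_{\sO_\sX}(\sE'')$, followed by additivity in K-theory and \lemref{lem:Sym powers} for the second claim. The only difference is that the paper simply imports this filtration from \cite[Constr.~25.2.5.4]{SAG} (extended to stacks by right Kan extension), whereas you sketch a construction of it via the Rees/filtered-object formalism — the step you yourself flag as the main obstacle, and which is exactly what the cited construction supplies.
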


  \begin{proof}
    For every such cofibre sequence and every integer $n\ge0$, there is a canonical filtration
      \begin{equation*}
        \Sym^n_{\sO_\sX}(\sE') = F^{0,n}
          \to F^{1,n}
          \to \cdots
          \to F^{n,n} = \Sym^n_{\sO_\sX}(\sE)
      \end{equation*}
    together with cofibre sequences
      \begin{equation*}
        F^{i-1,n} \to F^{i,n} \to \Sym^{n-i}_{\sO_\sX}(\sE') \otimes_{\sO_\sX} \Sym^{i}_{\sO_\sX}(\sE'')
      \end{equation*}
    for each $0 < i \le n$.
    This is constructed in \cite[Constr.~25.2.5.4]{SAG} for affines and the construction extends to stacks by a similar right Kan extension procedure.
    Since $\Sym^n_{\sO_{\sX}}(\sE)$ is perfect for all $n$ \cite[Prop.~25.2.5.3]{SAG} (and similarly for $\sE'$ and $\sE''$), these cofibre sequences give rise to the desired equivalences in $\K(\sX)$.
    The second claim follows from \lemref{lem:Sym powers}, since the assumption guarantees $\Sym^*_{\sO_{\sX}}(\sE)$ is also perfect (and similarly for $\sE'$ and $\sE''$).
  \end{proof}

  \begin{defn}
    The Euler class of a finite locally free sheaf $\sE$ is defined by
      \begin{equation*}
        e(\sE)
          = [\Sym^*_{\sO_\sX}(\sE[1])]
          = \sum_{n\ge 0} (-1)^n \cdot [\Sym^n_{\sO_\sX}(\sE[1])] \in \K(\sX).
      \end{equation*}
  \end{defn}

  There are canonical isomorphisms $\Sym^n_{\sO_\sX}(\sE[1]) \simeq \Lambda^n_{\sO_\sX}(\sE)[n]$ (cf. \cite[Prop.~25.2.4.2]{SAG}), where $\Lambda^n_{\sO_\sX}(-)$ denotes the derived exterior power, so this agrees with the usual definition of the Euler class (often denoted $\lambda_{-1}(\sE)$).

\section{Projective bundles}
\label{sec:vector bundle}

  Given a connective perfect complex $\sF$ on a \dAs $\sX$, there is an associated ``generalized vector bundle''
    \begin{equation*}
      \bV_\sX(\sF) = \Spec_\sX(\Sym^*_{\sO_\sX}(\sF)),
    \end{equation*}
  defined as the relative spectrum of its derived symmetric algebra.
  It is the moduli of cosections $\sF\to\sO_\sX$; that is, for any derived scheme $S$ over $\sX$, the space of $S$-points of $\bV_\sX(\sF)$ over $\sX$ is the space of $\sO_S$-linear homomorphisms $\sF|_S \to \sO_S$.
  
  This construction can exhibit some surprising behaviour.
  For example, if $\sE$ is a finite locally free sheaf, consider the morphism $p : \bV_\sX(\sE[1]) \to \sX$.
  This is a quasi-smooth closed immersion that fits in the homotopy cartesian square
    \begin{equation*}
      \begin{tikzcd}
        \bV_\sX(\sE[1]) \ar{r}{p}\ar[swap]{d}{p}
          & \sX \ar{d}{s}\\
        \sX \ar{r}{s}
          & \bV_\sX(\sE),
      \end{tikzcd}
    \end{equation*}
  where $s$ is the zero section.
  By the base change formula we obtain a canonical isomorphism
    \begin{equation*}
      s^*s_* \simeq p_*p^* \simeq (-) \otimes_{\sO_{\sX}} \Sym^*_{\sO_{\sX}}(\sE[1])
    \end{equation*}
  of functors $\Perf(\sX) \to \Perf(\sX)$, where the second isomorphism follows from $p_*(\sO) = \Sym^*_{\sO_\sX}(\sE[1])$ and the projection formula.
  We have just proven the following lemma in the special case where $t=s$:

  \begin{lem}\label{lem:i_*O zero locus}
    Let $\sX$ be a \dAs.
    Given a finite locally free sheaf $\sE$ and a cosection $t : \sE \to \sO_\sX$, we let $i : \sZ \to \sX$ denote its derived zero locus, so that there is a homotopy cartesian square
      \begin{equation*}
        \begin{tikzcd}
          \sZ \ar{r}{i}\ar[swap]{d}{i}
            & \sX \ar{d}{t}\\
          \sX \ar{r}{s}
            & \bV_\sX(\sE).
        \end{tikzcd}
      \end{equation*}
    Then there is an essentially unique homotopy
      \begin{equation*}
        i_*i^* \simeq (-) \cup e(\sE)
      \end{equation*}
    of maps $\K(\sX) \to \K(\sX)$.
  \end{lem}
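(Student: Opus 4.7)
The plan is to reduce the statement, via the projection formula, to an identification of a single K-theory class, and then to compute that class using base change together with a derived Koszul filtration.

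Since $i$ is a closed immersion, the projection formula supplies a canonical natural equivalence
\begin{equation*}
i_*i^*(-) \simeq (-) \otimes_{\sO_\sX} i_*(\sO_\sZ)
\end{equation*}
of endofunctors of $\Perf(\sX)$, and hence of the induced maps $\K(\sX) \to \K(\sX)$. Producing the asserted homotopy thus reduces to the construction of an essentially unique path $[i_*(\sO_\sZ)] \simeq e(\sE)$ in the space $\K(\sX)$.

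To compute $[i_*(\sO_\sZ)]$, I would apply base change to the given homotopy Cartesian square. Since both $s$ and $t$ are sections of the vector bundle projection $p_\bV : \bV_\sX(\sE) \to \sX$, the two projections $\sZ \to \sX$ in the square must coincide (post-composing with $p_\bV$ forces their equality), and base change then yields a canonical equivalence $i_*(\sO_\sZ) \simeq t^* s_*(\sO_\sX)$ in $\Perf(\sX)$. On affines, the sheaf $s_*(\sO_\sX)$ on $\bV_\sX(\sE)$ is computed by the derived Koszul resolution of the augmentation $\Sym^*_{\sO_\sX}(\sE) \twoheadrightarrow \sO_\sX$; this exhibits $s_*(\sO_\sX)$ with a canonical finite filtration whose associated gradeds are $p_\bV^* \Sym^n_{\sO_\sX}(\sE[1])$, $0 \le n \le \rk(\sE)$. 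By right Kan extension from affines the filtration extends to any \dAs. Pulling back along $t$ and using $t^* p_\bV^* \simeq \id$ produces a finite filtration of $t^* s_*(\sO_\sX)$ with gradeds $\Sym^n_{\sO_\sX}(\sE[1])$. Additivity of K-theory classes under finite filtrations (in the same vein as the cofibre-sequence computation in \lemref{lem:Sym determinant}), combined with \lemref{lem:Sym powers}, then gives
\begin{equation*}
[i_*(\sO_\sZ)] = \sum_{n \ge 0} [\Sym^n_{\sO_\sX}(\sE[1])] = e(\sE)
\end{equation*}
in $\K(\sX)$, as required.

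The main obstacle is this last step: constructing the derived Koszul filtration of $s_*(\sO_\sX)$ compatibly with pullbacks, and coherently enough to guarantee essential uniqueness rather than mere existence. This should proceed by adapting the simplicial commutative ring version of the classical Koszul resolution — in the spirit of Constr.~25.2.5.4 of \cite{SAG}, which underlies \lemref{lem:Sym determinant} — and then right Kan extending. Essential uniqueness of the final homotopy then descends from that of its three constituent natural equivalences: the projection formula, base change, and the Koszul filtration.
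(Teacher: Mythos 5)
Your reduction via the projection formula and base change to the single identification $[i_*(\sO_\sZ)] = [t^*s_*(\sO_\sX)] = e(\sE)$ is the same opening move as the paper's, but from there you take a genuinely different route. The paper never constructs a Koszul resolution: it first handles the case $t=s$ purely formally, using that the self-intersection square is homotopy cartesian with fibre product $\bV_\sX(\sE[1])$, so that $s^*s_* \simeq p_*p^* \simeq (-)\otimes \Sym^*_{\sO_\sX}(\sE[1])$ by base change and the tautology $p_*(\sO)=\Sym^*_{\sO_\sX}(\sE[1])$; it then reduces arbitrary $t$ to this case by passing to the projective completion $\bP(\sE\oplus\sO)$ and invoking the projective bundle formula, which exhibits both $\bar s^*$ and $\bar t^*$ as the cofibre of the \emph{same} map $\infty_* : \K(\P_\sX(\sE)) \to \K(\P_\sX(\sE\oplus\sO))$ — this is what delivers both the homotopy and its essential uniqueness (contractibility of the space of such identifications in a stable setting). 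Your route instead computes $t^*s_*(\sO_\sX)$ directly for arbitrary $t$ via a derived Koszul filtration of $s_*(\sO_\sX)$ with gradeds $p_\bV^*\Sym^n(\sE[1])$. That is morally the classical argument and, if carried out, would buy you a statement at the level of filtered perfect complexes rather than just K-theory; but note that the step you flag as the ``main obstacle'' is precisely the entire mathematical content that the paper's detour is designed to avoid. Two cautions there: (i) \cite[Constr.~25.2.5.4]{SAG} filters $\Sym^n$ of a cofibre sequence and is not the relevant tool — you would need the derived Koszul resolution of the zero section (equivalently, the identification of $s$ as the derived zero locus of the tautological cosection $p_\bV^*\sE \to \sO$ together with a coherent filtration of the resulting structure sheaf), which is a nontrivial construction in the simplicial-commutative setting; note that the naive formal computation $\Sym(0)\otimes_{\Sym(\sE)}\Sym(0) \simeq \Sym(\sE[1])$ only applies when both algebra maps are induced by module maps, i.e.\ only for $t=s$. (ii) Your claim of essential uniqueness is asserted to ``descend'' from the constituents, but essential uniqueness of the Koszul filtration itself is exactly what would need to be established, whereas in the paper it falls out of the cofibre-sequence characterization for free.
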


  \begin{proof}
    The result of composing such a square with the open immersion $\bV_\sX(\sE) \to \bP(\sE\oplus\sO)$ is still homotopy cartesian (see \cite[Subsect.~3.1]{KhanKblow} for background on projective bundles in this setting).
    Let $\bar{s} : \sX \to \bP(\sE\oplus\sO)$ and $\bar{t} : \sX \to \bP(\sE\oplus\sO)$, respectively, denote the induced morphisms.
    By the base change formula we have
      \begin{equation*}
        i_*i^*
        \simeq \bar{t}^*\bar{s}_*
      \end{equation*}
    as functors $\Perf(\sX) \to \Perf(\sX)$.
    In the case $t=s$, we have as above $\bar{s}^*\bar{s}_* \simeq (-) \otimes_{\sO_\sX} \Sym^*_{\sO_\sX}(\sE[1])$, so in particular $\bar{s}^*\bar{s}_* \simeq (-) \cup e(\sE)$ in K-theory.
    Thus it will suffice to exhibit an (essentially unique) homotopy between the two maps $\K(\bP(\sE\oplus\sO)) \to \K(\sX)$ induced by $\bar{s}^*$ and $\bar{t}^*$.
    But from the projective bundle formula \cite[Cor.~3.4.1]{KhanKblow} it follows that there is an exact triangle
      \begin{equation*}
        \K(\P_\sX(\sE))
          \xrightarrow{\infty_*} \K(\P_\sX(\sE\oplus\sO))
          \xrightarrow{\bar{u}^*} \K(\sX),
      \end{equation*}
    for \emph{any} $\bar{u} : \sX \to \bP(\sE\oplus\sO)$ induced by a section $u : \sX \to \bV_\sX(\sE)$.
    In other words, both maps in question are the cofibre of the same map $\infty_*$, whence the desired homotopy.
  \end{proof}

  Let $\pi : \P(\sE\oplus\sO) \to \sX$ denote the projection.
  We have on $\P(\sE\oplus\sO)$ the canonical exact triangle of locally free sheaves
    \begin{equation*}
      \sQ \to \pi^*(\sE)\oplus\sO \to \sO(1).
    \end{equation*}
  Recall that the zero section $\bar{s} : \sX\to\P(\sE\oplus\sO)$ can be written as the derived zero locus of the canonical cosection
    \begin{equation*}
      \sQ \to \pi^*(\sE)\oplus\sO \xrightarrow{\mrm{pr}_2} \sO.
    \end{equation*}
  Thus we get:

  \begin{cor}\label{cor:sbar_*}
    There is a canonical homotopy
      \begin{equation*}
        \bar{s}_*(-) \simeq e(\sQ) \cup \pi^*(-)
      \end{equation*}
    of maps $\K(\sX) \to \K(\P(\sE\oplus\sO))$.
  \end{cor}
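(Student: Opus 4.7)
The plan is simply to specialize \lemref{lem:i_*O zero locus} to the situation set up in the paragraph preceding the corollary, and then convert the resulting formula for $\bar{s}_*\bar{s}^*$ into one for $\bar{s}_*$ alone by exploiting the fact that $\bar{s}$ is a section of $\pi$.

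First I would observe that the data described just before the statement -- the locally free sheaf $\sQ$ on $\P(\sE\oplus\sO)$, the cosection $\sQ \to \pi^*(\sE)\oplus\sO \to \sO$, and the identification of $\bar{s}$ with its derived zero locus -- place us exactly in the hypotheses of \lemref{lem:i_*O zero locus} (with ambient stack $\P(\sE\oplus\sO)$ in place of $\sX$, and $\sQ$ in place of $\sE$). That lemma then supplies an essentially unique homotopy
\begin{equation*}
  \bar{s}_*\bar{s}^* \simeq (-) \cup e(\sQ)
\end{equation*}
of endomorphisms of $\K(\P(\sE\oplus\sO))$.

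To pass from this to the stated formula, I would precompose with $\pi^* : \K(\sX) \to \K(\P(\sE\oplus\sO))$. Since $\pi\circ\bar{s} = \id_\sX$, we have $\bar{s}^*\pi^* \simeq \id$ on $\K(\sX)$, and hence
\begin{equation*}
  \bar{s}_*(-) \simeq \bar{s}_*\bar{s}^*(\pi^*(-)) \simeq \pi^*(-) \cup e(\sQ),
\end{equation*}
which is the desired homotopy.

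I do not expect any genuine obstacle here: the corollary is essentially a repackaging of the previous lemma. The only nontrivial input is the identification of $\bar{s}$ as the derived zero locus of the cosection $\sQ \to \sO$, which is already recalled in the paragraph above the statement (and is a standard consequence of the fact that $\sQ$ is by definition the fibre of $\pi^*(\sE)\oplus\sO \to \sO(-1)$, with the composite cosection vanishing precisely along the locus where $\sO(-1)$ is trivialized by its second coordinate, namely the zero section).
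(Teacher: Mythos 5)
Your proposal is correct and matches the paper's argument: both specialize \lemref{lem:i_*O zero locus} to the zero section $\bar{s}$ realized as the derived zero locus of the canonical cosection of $\sQ$. The only cosmetic difference is in the last step, where the paper evaluates $\bar{s}_*\bar{s}^* \simeq (-)\cup e(\sQ)$ at $\sO_\sX$ and then invokes the projection formula, while you precompose with $\pi^*$ and use $\bar{s}^*\pi^*\simeq\id$; these are interchangeable.
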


  \begin{proof}
    By \lemref{lem:i_*O zero locus}, $\bar{s}_*(\sO) \simeq e(\sQ)$.
    By the projection formula, $\bar{s}_*(-) \simeq \bar{s}_*(\sO) \cup \pi^*(-) \simeq e(\sQ) \cup \pi^*(-)$.
  \end{proof}

  We are now ready to prove a special case of \thmref{thm:excess}.
  Let $p : \sX' \to \sX$ be a morphism of \dAss.
  Let $\sE$ and $\sE'$ be finite locally free sheaves on $\sX$ and $\sX'$, respectively, together with a surjection
    \begin{equation*}
      p^*(\sE) \twoheadrightarrow \sE'
    \end{equation*}
  whose fibre we denote $\Delta$.
  This induces an excess intersection square
    \begin{equation*}
      \begin{tikzcd}
        \sX' \ar{r}{\bar{s}'}\ar{d}{p}
          & \bP(\sE'\oplus\sO) \ar{d}{q}
        \\
        \sX \ar{r}{\bar{s}}
          & \bP(\sE\oplus\sO),
      \end{tikzcd}
    \end{equation*}
  where $\bar{s}$ and $\bar{s}'$ are the zero sections.

  \begin{claim}\label{claim:projective completions}
    The excess intersection formula
      \begin{equation*}
        q^* \bar{s}_*
          \simeq \bar{s}'_* (p^*(-) \cup e(\Delta))
      \end{equation*}
    holds for the above square.
  \end{claim}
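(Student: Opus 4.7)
The plan is to apply \corref{cor:sbar_*} to both projective completions $\bP(\sE\oplus\sO)$ and $\bP(\sE'\oplus\sO)$, reducing the excess formula to a K-theoretic identity of Euler classes of tautological subsheaves. Write $\pi, \pi'$ for the two projections and $\sQ, \sQ'$ for the respective tautological subsheaves, fitting into the defining triangles
\[
  \sQ \to \pi^*\sE\oplus\sO \to \sO(-1),
  \qquad
  \sQ' \to \pi'^*\sE'\oplus\sO \to \sO(-1).
\]
Using $\pi\circ q \simeq p\circ\pi'$ and the projection formula, the left-hand side of the claim becomes $q^*e(\sQ) \cup \pi'^*p^*(-)$, while the right-hand side becomes $e(\sQ')\cup\pi'^*e(\Delta)\cup\pi'^*p^*(-)$. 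The claim therefore reduces to exhibiting a canonical homotopy
\[
  q^*e(\sQ) \simeq e(\sQ') \cup \pi'^*e(\Delta) \quad\text{in } \K(\bP(\sE'\oplus\sO)).
\]

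The second step is to produce a cofibre sequence of finite locally free sheaves $\pi'^*\Delta \to q^*\sQ \to \sQ'$ on $\bP(\sE'\oplus\sO)$. Starting from the defining cofibre sequence $\Delta \to p^*\sE \to \sE'$ on $\sX'$, direct-summing with the identity on $\sO$ and pulling back via $\pi'$ yields a cofibre sequence $\pi'^*\Delta \to q^*(\pi^*\sE\oplus\sO) \to \pi'^*(\sE'\oplus\sO)$. The construction of $q$ via the closed immersion $\bP(\sE'\oplus\sO)\hookrightarrow\bP_{\sX'}(p^*(\sE\oplus\sO))$ coming from the surjection $p^*(\sE\oplus\sO)\twoheadrightarrow\sE'\oplus\sO$ ensures that the pulled-back tautological cosection $q^*(\pi^*\sE\oplus\sO)\to\sO(-1)$ factors through the quotient $q^*(\pi^*\sE\oplus\sO)\twoheadrightarrow\pi'^*(\sE'\oplus\sO)$ followed by the tautological cosection on $\bP(\sE'\oplus\sO)$. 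The octahedral axiom applied to the resulting map of cofibre sequences (with the identity on $\sO(-1)$ on the right) identifies the fibre of $q^*\sQ\to\sQ'$ with $\pi'^*\Delta$, producing the desired sequence.

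Finally, multiplicativity of the Euler class on cofibre sequences of finite locally free sheaves gives $q^*e(\sQ) = e(q^*\sQ) \simeq e(\pi'^*\Delta)\cup e(\sQ') = \pi'^*e(\Delta)\cup e(\sQ')$, completing the reduction. I expect the main technical obstacle to be this last ingredient: although \lemref{lem:Sym determinant} proves multiplicativity of $\Sym^*$ on cofibre sequences of \emph{connective} perfect complexes, the Euler class $e(\sF) = [\Sym^*_{\sO_\sX}(\sF[1])]$ involves the shifted complex $\sF[1]$, which is not connective, so that lemma does not apply literally. One circumvents this either via the identification $\Sym^n(\sF[1]) \simeq \Lambda^n(\sF)[n]$ together with the derived Koszul filtration for exterior powers (obtained by a right Kan extension argument parallel to \lemref{lem:Sym determinant}), or by verifying the multiplicativity on affines using classical $\lambda$-ring theory and extending to stacks by descent.
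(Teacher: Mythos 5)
Your argument is correct and is essentially the paper's own proof: both reduce via \corref{cor:sbar_*} to the single identity $e(q^*\sQ) \simeq e((\pi')^*\Delta) \cup e(\sQ')$, which is obtained from the cofibre sequence $(\pi')^*\Delta \to q^*\sQ \to \sQ'$ induced by the surjection $p^*\sE \twoheadrightarrow \sE'$ together with multiplicativity of the Euler class. The one correction concerns your anticipated ``technical obstacle'': in the homological (SAG) grading used throughout, $\sF[1]$ is still \emph{connective} for $\sF$ finite locally free, and $\Sym^n_{\sO_\sX}(\sF[1]) \simeq \Lambda^n_{\sO_\sX}(\sF)[n]$ vanishes for $n > \rk \sF$, so \lemref{lem:Sym determinant} applies literally to the shifted cofibre sequence and no detour through a separate Koszul filtration or $\lambda$-ring argument is needed.
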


  \begin{proof}
    Let $\pi : \bP(\sE\oplus\sO) \to \sX$ and $\pi' : \bP(\sE'\oplus\sO) \to \sX'$ denote the respective projections.
    Let $\sQ$ and $\sQ'$ denote the respective universal hyperplane sheaves on $\P(\sE\oplus\sO)$ and $\P(\sE'\oplus\sO)$.
    The surjection $p^*(\sE) \to \sE'$ gives rise to a canonical morphism $q^*\sQ \to \sQ'$, whose fibre is $(\pi')^*(\Delta)$.
    Thus \lemref{lem:Sym determinant} provides a canonical homotopy
      \begin{equation*}
        e(q^*\sQ) \simeq e((\pi')^*\Delta) \cup e(\sQ')
      \end{equation*}
    in $\K(\P(\sE'\oplus\sO))$.
    Now two applications of \corref{cor:sbar_*} give:
      \begin{align*}
        q^* \bar{s}_*
          &\simeq q^*(\pi^*(-) \cup e(\sQ))\\
          &\simeq (\pi')^* p^*(-) \cup e(q^*\sQ)\\
          &\simeq (\pi')^* p^*(-) \cup e((\pi')^*\Delta) \cup e(\sQ')\\
          &\simeq (\pi')^* (p^*(-) \cup e(\Delta)) \cup e(\sQ')\\
          &\simeq \bar{s}'_* (p^*(-) \cup e(\Delta)),
      \end{align*}
    as desired.
  \end{proof}

\section{Deformation space}
\label{sec:deformation}

  Let $f : \sX \to \sY$ be a quasi-smooth closed immersion of \dAss.
  Write $M$ for the blow-up $\Bl_{\sX\times\{\infty\}}(\sY\times\P^1)$ as in \cite{KhanRydh}.
  It fits in a commutative diagram
    \begin{equation*}
      \begin{tikzcd}
        \sX \ar{r}{s_0}\ar[swap]{d}{f}
          & \sX \times \P^1 \ar{d}{\hat{f}}
          & \sX \ar[swap]{l}{s_\infty}\ar{d}{f_\infty}
        \\
        \sY \ar{r}{\sigma_0}\ar[swap]{d}{\pi_0}
          & M \ar{d}{\widehat{\pi}}
          & \P(\sN_{\sX/\sY}\oplus\sO) \ar[swap]{l}{\sigma_\infty}\ar{d}{\pi_\infty}
        \\
        \{0\} \ar{r}
          & \P^1
          & \{\infty\} \ar{l}
      \end{tikzcd}
    \end{equation*}
  The two left-hand squares and upper right-hand square are homotopy cartesian.
  The morphism $\hat{f}$ is
    \begin{equation*}
      \sX\times\P^1 = \Bl_{\sX\times\{\infty\}}(\sX\times\P^1) \to \Bl_{\sX\times\{\infty\}}(\sY\times\P^1),
    \end{equation*}
  induced by $f\times\id : \sX\times\P^1\to\sY\times\P^1$, and the morphism $f_\infty$ is the zero section.

  Denote by $M_\infty := M \fibprodR_{\P^1} \{\infty\}$ the special fibre, and by $i_\infty : M_\infty \to M$ the inclusion.
  Then we have a canonical homotopy
    \begin{equation}\label{eq:linear equivalence}
      (\sigma_0)_*(\sigma_0)^* \simeq (i_\infty)_*(i_\infty)^*
    \end{equation}
  of maps $\K(M)\to\K(M)$.
  Indeed, we have $0_*(\sO) \simeq \infty_*(\sO)$ in $\K(\P^1)$, so by the base change formula there is a canonical identification
    \begin{align*}
      (\sigma_0)_*(\sO)
        \simeq (\sigma_0)_*(\pi_0)^*(\sO)
        &\simeq \widehat{\pi}^*0_*(\sO)
      \\
        &\simeq \widehat{\pi}^*\infty_*(\sO)
        \simeq (i_\infty)_*(\pi_\infty)^*(\sO)
        \simeq (i_\infty)_*(\sO)
    \end{align*}
  in $\K(M)$.  Thus the claim follows from the projection formula.

  The fibre $M_\infty$ fits in a homotopy cartesian and cocartesian square
    \begin{equation*}
      \begin{tikzcd}
        \P(\sN_{\sX/\sY}) \ar{r}\ar{d}
          & \Bl_{\sX}(\sY) \ar{d}
        \\
        \P(\sN_{\sX/\sY}\oplus\sO) \ar{r}
          & M_\infty.
      \end{tikzcd}
    \end{equation*}
  That is, $M_\infty$ is the sum of the two virtual Cartier divisors $\P(\sN_{\sX/\sY}\oplus\sO)$ and $\Bl_{\sX}(\sY)$ on $M$.
  We denote by $i_\infty : M_\infty \to M$ the inclusion, and by $b : \Bl_{\sX}(\sY) \to M$, and $c : \P(\sN_{\sX/\sY}) \to M$ the composites with $i_\infty$.
  We have a canonical homotopy
    \begin{equation}\label{eq:divisors sum}
      (i_\infty)_*(i_\infty)^*
        \simeq (\sigma_\infty)_*(\sigma_\infty)^* + b_*b^* - c_*c^*
    \end{equation}
  of maps $\K(M) \to \K(M)$, by the following lemma.

  \begin{lem}\label{lem:divisors sum}
    Let $D \hookrightarrow \sX$ and $D' \hookrightarrow \sX$ be virtual Cartier divisors on a \dAs $\sX$.
    Denote by $D \cap D' = D \fibprodR_\sX D'$ their intersection and by $$D + D' = D \fibcoprod_{D\cap D'} D'$$ their sum.
    Then we have a canonical homotopy
      \begin{equation*}
        (i_{D+ D'})_*(i_{D+ D'})^*
          \simeq (i_D)_*(i_D)^* + (i_{D'})_*(i_{D'})^* - (i_{D\cap D'})_*(i_{D\cap D'})^*
      \end{equation*}
    of maps $\K(\sX) \to \K(\sX)$.
  \end{lem}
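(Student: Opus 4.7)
The plan is to reduce the homotopy of functors to a single cofibre sequence of perfect complexes on $\sX$ and then transfer it to K-theory via the projection formula. First, I would exploit the defining pushout structure: by construction $D + D' = D \fibcoprod_{D \cap D'} D'$ is a pushout of closed immersions into $\sX$. Since $\Qcoh$ sends colimits of \dAss to limits of stable \inftyCats, we have $\sO_{D+D'} \simeq \sO_D \fibprod_{\sO_{D \cap D'}} \sO_{D'}$ in $\Qcoh(D+D')$. Pushing this square forward to $\sX$ along the respective closed immersions (whose pushforwards are fully faithful and exact) produces a bicartesian square in $\Qcoh(\sX)$ and, equivalently, a canonical cofibre sequence
\begin{equation*}
    (i_{D+D'})_*\sO_{D+D'}
        \to (i_D)_*\sO_D \oplus (i_{D'})_*\sO_{D'}
        \to (i_{D \cap D'})_*\sO_{D \cap D'}
\end{equation*}
of perfect complexes on $\sX$.

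Second, I would apply the projection formula: for any closed immersion $i : \sZ \to \sX$ occurring above and any $\sF \in \Perf(\sX)$, there is a natural isomorphism $i_*i^*\sF \simeq \sF \otimes_{\sO_\sX} i_*\sO_\sZ$. Tensoring the cofibre sequence above with $\sF$ preserves exactness and is natural in $\sF$, so passing to $\K(\sX)$ yields the claimed additive identity of maps $\K(\sX) \to \K(\sX)$.

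The main obstacle lies in the first step: one must justify that the pushout $D + D'$ behaves well under pushforward to $\sX$. That $D+D'$ is a \dAs fitting into such a pushout square is the content of the virtual Cartier divisor formalism of \cite{KhanRydh}; the passage to $\Qcoh(\sX)$ then relies on nothing more than exactness and full faithfulness of pushforward along closed immersions. If one wishes to bypass this geometric argument altogether, an alternative route is to apply \lemref{lem:i_*O zero locus} directly to each of the four zero loci, noting that $D$, $D'$, $D+D'$, and $D \cap D'$ are derived zero loci of cosections from $\sO_\sX(-D)$, $\sO_\sX(-D')$, $\sO_\sX(-D) \otimes \sO_\sX(-D')$, and $\sO_\sX(-D) \oplus \sO_\sX(-D')$ respectively. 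The lemma then reduces to the K-theoretic identity $e(\sL_1 \otimes \sL_2) = e(\sL_1) + e(\sL_2) - e(\sL_1 \oplus \sL_2)$ for line bundles, which is a direct expansion of $e(\sL) = 1 - [\sL]$ combined with the multiplicativity $e(\sL_1 \oplus \sL_2) \simeq e(\sL_1) \cup e(\sL_2)$ supplied by \lemref{lem:Sym determinant}.
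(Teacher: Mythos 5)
Your main argument is precisely the paper's proof: the pushout defining $D+D'$ yields the bicartesian square of pushed-forward structure sheaves $(i_{D+D'})_*\sO_{D+D'} \simeq (i_D)_*\sO_D \fibprod_{(i_{D\cap D'})_*\sO_{D\cap D'}} (i_{D'})_*\sO_{D'}$ in $\Perf(\sX)$, hence the additive relation in $\K(\sX)$, which the projection formula upgrades to the claimed identity of endomorphisms of $\K(\sX)$. (Your alternative route via \lemref{lem:i_*O zero locus} and the Euler-class identity $e(\sL_1\otimes\sL_2) = e(\sL_1)+e(\sL_2)-e(\sL_1\oplus\sL_2)$ is a genuinely different and also plausible argument, but it relies on realizing $D$, $D'$, $D+D'$ and $D\cap D'$ as derived zero loci of the indicated cosections, which the paper's one-line argument does not need.)
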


  \begin{proof}
    By definition of $D+D'$ we have
      \begin{equation*}
        (i_{D+D'})_*(\sO_{D+ D'})
          \simeq (i_D)_*(\sO_D) \fibprod_{(i_{D\cap D'})_*(\sO_{D\cap D'})} (i_{D'})_*(\sO_{D'})
      \end{equation*}
    in $\Perf(\sX)$.
    This induces in $\K(\sX)$ a canonical homotopy
      \begin{equation*}
        (i_{D+ D'})_*(\sO_{D+ D'})
          \simeq (i_D)_*(\sO_D) + (i_{D'})_*(\sO_{D'}) - (i_{D\cap D'})_*(\sO_{D\cap D'}).
      \end{equation*}
    We conclude using the projection formula.
  \end{proof}

  Since the intersection
    \begin{equation*}
      \Bl_{\sX}(\sY) \fibprod_{M} (\sX\times\P^1)
        = \Bl_{\sX}(\sY) \fibprod_{M_\infty} \sX
        = \P(\sN_{\sX/\sY}) \fibprod_{\P(\sN_{\sX/\sY\oplus\sO})} \sX
    \end{equation*}
  is empty, we have $b^*\hat{f}_* = 0$ and $c^*\hat{f}_* = 0$ by the base change formula.
  Thus \eqref{eq:linear equivalence} and \eqref{eq:divisors sum} induce the homotopy
    \begin{equation}\label{eq:linear equivalence 2}
      (\sigma_0)_*(\sigma_0)^* \hat{f}_*
        \simeq (i_\infty)_*(i_\infty)^* \hat{f}_*
        \simeq (\sigma_\infty)_*(\sigma_\infty)^* \hat{f}_*
    \end{equation}
  of maps $\K(\sX\times\P^1) \to \K(M)$.

\section{Proof}
\label{sec:proof}

  Consider an excess intersection square of the form
    \begin{equation*}
      \begin{tikzcd}
        \sX' \ar{r}{f'}\ar[swap]{d}{p}
          & \sY' \ar{d}{q}
        \\
        \sX \ar[swap]{r}{f}
          & \sY.
      \end{tikzcd}
    \end{equation*}
  We keep the notation of the previous section, so $M = \Bl_{\sX\times\{\infty\}}(\sY\times\P^1)$, etc.
  We consider all the same constructions for $f' : \sX' \to \sY'$, with notation decorated by primes: $M' = \Bl_{\sX'\times\{\infty\}}(\sY'\times\P^1)$, and so on.
  We have morphisms of excess intersection squares
    \begin{equation*}
      \begin{tikzcd}
        \sX' \ar{r}{f'}\ar[swap]{d}{p}
          & \sY' \ar{d}{q}
        \\
        \sX \ar[swap]{r}{f}
          & \sY.
      \end{tikzcd}
      \quad\hookrightarrow\quad
      \begin{tikzcd}
        \sX'\times\P^1\ar{r}{\hat{f'}}\ar{d}{\hat{p}}
          & M' \ar{d}{\hat{q}}
        \\
        \sX\times\P^1\ar{r}{\hat{f}}
          & M
      \end{tikzcd}
      \quad\hookleftarrow\quad
      \begin{tikzcd}
        \sX' \ar{r}{f'_\infty}\ar[swap]{d}{p}
          & \P(\sN_{\sX'/\sY'}\oplus\sO) \ar{d}{q_\infty}
        \\
        \sX \ar[swap]{r}{f_\infty}
          & \P(\sN_{\sX/\sY}\oplus\sO).
      \end{tikzcd}
    \end{equation*}
  That the middle square is an excess intersection square is clear from the observation that the surjectivity condition~\ref{item:surj} can be checked on the fibres of $\sX' \times \P^1$.
  
  Consider the canonical morphisms $r : \sX \times \P^1 \to \sX$, $\rho : M \to \sY$ (as well as their primed versions), retractions of $s_0 : \sX \to \sX \times \P^1$ and $\sigma_0 : \sY \to M$, respectively.
  Using $r\circ s_0 = \id$ and the base change formula, we have
    \begin{align*}
      q^*f_*
        &\simeq q^*f_*(s_0)^*r^* \\
        &\simeq q^*(\sigma_0)^*\hat{f}_*r^* \\
        &\simeq (\sigma'_0)^* \hat{q}^* \hat{f}_* r^*
    \end{align*}
  and similarly
    \begin{align*}
      (q_\infty)^* (f_\infty)_*
        &\simeq (q_\infty)^* (f_\infty)_*(s_\infty)^* r^*\\
        &\simeq (q_\infty)^*(\sigma_\infty)^* \hat{f}_*r^*\\
        &\simeq (\sigma'_\infty)^*\hat{q}^*\hat{f}_*r^*.
    \end{align*}
  Thus \eqref{eq:linear equivalence 2} induces an equivalence
    \begin{equation*}
      (\sigma'_0)_* q^*f_*
        \simeq (\sigma'_0)_*(\sigma'_0)^* \hat{q}^* \hat{f}_* r^*
        \simeq (\sigma'_\infty)_*(\sigma'_\infty)^* \hat{q}^* \hat{f}_* r^*
        \simeq (\sigma'_\infty)_* (q_\infty)^* (f_\infty)_*.
    \end{equation*}
  Applying $\rho'_*$ gives
    \begin{equation*}
      q^*f_* \simeq \rho'_* (\sigma'_\infty)_* (q_\infty)^* (f_\infty)_*
    \end{equation*}
  since $\rho' \circ \sigma'_0 = \id$.
  Finally, \claimref{claim:projective completions} yields the desired equivalence
    \begin{equation*}
      q^*f_*
        \simeq \rho'_* (\sigma'_\infty)_* (f'_\infty)_* (p^*(-) \cup e(\Delta))
        \simeq f'_* (p^*(-) \cup e(\Delta)).
    \end{equation*}

\changelocaltocdepth{2}


\bibliographystyle{halphanum}

\bigskip \noindent {\href{mailto:khan@ihes.fr}{khan@ihes.fr}} \medskip

\noindent IHES\\
\noindent 35 route de Chartres\\
\noindent Bures-sur-Yvette\\
\noindent 91440 France\\

\noindent Institute of Mathematics\\
\noindent Academia Sinica\\
\noindent Taipei\\
\noindent 10617 Taiwan

\end{document}